\theoremstyle{plain}\newtheorem{Theorem}{Theorem}
\theoremstyle{plain}
\theoremstyle{plain}
\theoremstyle{plain}
\theoremstyle{plain}\newtheorem{Proposition}[Theorem]{Proposition}
\theoremstyle{definition}
\theoremstyle{definition}
\theoremstyle{definition}
\theoremstyle{definition}
\theoremstyle{definition}
\theoremstyle{definition}\newtheorem{Remark}[Theorem]{Remark}
    \def\OG{{\mathcal{O}G}}  
    \def\OH{{\mathcal{O}H}}  
    \def\OP{{\mathcal{O}P}}
\def\CO{{\mathcal{O}}}    \def\OPE{{\mathcal{O}(P\rtimes E)}}
\def\Z{{\mathbb Z}}
\def\Aut{\mathrm{Aut}}
\def\f{\mathrm{f}}
    \def\tenC{\otimes_C}
\def\Irr{\mathrm{Irr}}      
\def\mf{\mathrm{mf}}
      \def\tenO{\otimes_{\mathcal{O}}}
\def\mod{\mathrm{mod}}
\def\Out{\mathrm{Out}}
\def\Pr{\mathrm{Pr}}
\def\Res{\mathrm{Res}}           
\def\sf{\mathrm{sf}}
\title[Strong Frobenius numbers for cyclic defect]{The strong Frobenius 
numbers for cyclic defect blocks are equal to one} 
\author[Linckelmann]{Markus Linckelmann}
\address{Department of Mathematics,
City, University of  London EC1V 0HB, United Kingdom}
\email{markus.linckelmann.1@city.ac.uk}
\thanks{This material is based upon work supported by the National 
Science Foundation under Grant No. DMS-1440140 while the author was in 
residence at the Mathematical Sciences Research Institute in Berkeley, 
California, during the Spring  2018 semester. In addition, the author 
would like to acknowledge support from EPSRC under the grant 
EP/M02525X/1.} 
\keywords{Block, cyclic defect, Frobenius number}
\subjclass[2010]{20C20}
\date{\today}
\begin{document}

\begin{abstract}
The purpose of this note is to provide a reference for the fact that 
the strong Frobenius number, in the sense of Eaton and Livesey, of a 
block of a finite group with a cyclic defect group is 
equal to $1$. This answers a question of Farrell and Kessar.
\end{abstract}

\maketitle

Calculating Frobenius numbers of block algebras originated in work
of Kessar \cite{KessarDonovan} as one of the steps towards
Donovan's finiteness conjecture, which predicts that for a fixed
finite $p$-group $P$, there are only finitely many Morita equivalence
classes of blocks with defect groups isomorphic to $P$. In \cite{Fa17},
\cite{FaKe}, the authors calculate Frobenius numbers of blocks of 
quasi-simple finite groups. The notion of Frobenius numbers has been 
refined in work of Eaton and Livesey \cite{EaLiv} with the purpose to 
take in to account the action on character groups of algebra 
isomorphisms between Galois conjugate blocks. 

Let $p$ be a prime and let $(K,\CO,k)$ be a $p$-modular system, with
$k$ algebraically closed of characteristic $p$ and $\CO$ absolutely 
unramified of characteristic $0$. Let $\bar K$ be an algebraic closure
of $K$. For $G$ a finite group and $B$ a block of $\OG$, we denote by 
$\Irr(G)$ the set of absolutely irreducible $\bar K$-valued characters 
of $G$, and by $\Irr(B)$ the subset of $\Irr(G)$ of absolutely 
irreducible characters associated with $B$. Since $\CO$ is unramified,
the automorphism of the field $k$ sending $\mu\in k$ to $\mu^p$ lifts 
uniquely to an automorphism of the ring $\CO$, denoted by $\sigma$. We 
extend this to a ring automorphism, still denoted by $\sigma$, of $\OG$ 
sending $\sum_{x\in G}\lambda_x x$ to 
$\sum_{x\in G} \sigma(\lambda_x) x$.

In what follows, modules are finitely generated unital left modules,
unless stated otherwise.
For $U$ an $\OG$-module, we denote by $U^\sigma$ the $\OG$-module which
is equal to $U$ as an abelian group and on which a group algebra 
element $\sum_{x\in G}\lambda_x x$ acts as the action of the element
$\sum_{x\in G}\sigma^{-1}(\lambda_x) x$; that is, $U^\sigma$ is the
image of $U$ under the self Morita equivalence $\Res_{\sigma^{-1}}$
of $\mod(\OG)$ as an abelian category (but not as an $\CO$-linear
category, since the action of scalars in $\CO$ get twisted by
$\sigma^{-1}$). If $U$ is a $B$-module, then clearly $U^\sigma$ is a
$\sigma(B)$-module. We use the analogous notation for bimodules.

Extend $\sigma$ to an automorphism $\bar\sigma$ of $\bar K$ such that 
$\bar\sigma$ acts as identity on all roots of unity of $p$-power order 
in $\bar K$ (see \cite[Lemma 2.2]{FaKe} for the existence of 
$\bar\sigma$). As before, we extend $\bar\sigma$ to a ring automorphism 
of $\bar KG$, still denoted by $\bar\sigma$. For $\chi$ a character of a 
$\bar KG$-module $X$ we denote by $\chi^{\bar \sigma}$ the character of 
the $\bar K G$-module $X^{\bar\sigma}$. It is easy to see that for 
$x\in$ $G$ we have $\chi^{\bar\sigma}(x)=$ $\bar\sigma(\chi(x))$, or 
more generally, $\chi^{\bar\sigma}=$ 
$\bar\sigma\circ\chi\circ\bar\sigma^{-1}$, where $\bar\sigma^{-1}$ is 
considered as a ring automorphism of $\bar KG$ and $\bar\sigma$ is 
considered as an automorphism of $\bar K$. Clearly if $\chi\in$ 
$\Irr(B)$, then $\chi^{\bar\sigma}\in$ $\Irr(\sigma(B))$. It is shown in 
\cite[Proposition 3.5]{EaLiv} that  $\chi^{\bar\sigma}(us)=$ 
$\chi(us^p)$, where $u$ is a $p$-element in $G$ and $s$ a $p'$-element 
in $C_G(u)$. 

Following \cite{KessarDonovan}, \cite[Definition 1.3]{BeKe}, the 
{\it Frobenius number of a block $B$ of $\OG$}, denoted by $\f(B)$, is 
the smallest positive integer $m$ such that $B\cong$ $\sigma^m(B)$ as 
$\CO$-algebras, and the {\it Morita-Frobenius number of $B$}, denoted 
by $\mf(B)$, is the smallest positive integer $m$ such that $B$ and 
$\sigma^m(B)$ are Morita equivalent. 
Following \cite[Definition 3.8]{EaLiv} the {\it strong Frobenius number 
over of the block algebra $B$ of $\OG$}, denoted by $\sf(B)$, is the 
smallest positive integer $m$ such that there exists an $\CO$-algebra 
isomorphism $\beta : B\cong$ $\sigma^m(B)$ whose scalar extension 
$\bar\beta$ to $\bar K\tenO B\cong$ $\bar K\tenO \sigma^m(B)$ induces 
the correspondence $\chi\mapsto$ $\chi^{\bar\sigma^m}$ between  
$\Irr(B)$ and $\Irr(\sigma^m(B))$, or more formally, which satisfies 
the equality of $\bar K$-valued central functions
$$\bar\sigma^m\circ\chi\circ\bar\sigma^{-m} = \chi\circ\bar\beta^{-1}$$
on $\bar K\tenO \sigma^m(B)$, with the above notational abuse of 
considering $\bar\sigma^{-m}$ as a ring automorphism of $\bar K G$, 
restricted to $\bar K\tenO \sigma^m(B)$, while 
$\bar\sigma^m$ is considered as an automorphism of $\bar K$. 

Frobenius numbers of blocks over $k$ may a priori be smaller, so it is 
important to keep track of the base ring, which for the purpose of the 
present note is always the ring $\CO$ defined above.
It is shown in \cite[Proposition 2.3]{Fa17} that Morita Frobenius 
numbers of blocks over $k$ with cyclic defect groups are $1$. We extend 
this to strong Frobenius numbers over $\CO$. This result is used in 
\cite{FaKe}, which is the motivation for the present note.

\begin{Theorem} \label{cyclic-sfO}
Let $G$ be a finite group and $B$ a block of $\OG$ having a cyclic
defect group $P$. We have $\sf(B)=1$.
\end{Theorem}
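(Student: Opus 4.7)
The plan is to reduce, via the source-algebra theory of blocks with cyclic defect, to a model group algebra on which $\sf=1$ follows from an elementary character computation.

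First, using Puig's source-algebra description of blocks with cyclic defect, $B$ is source-algebra equivalent to its Brauer correspondent $b$ in $\CO N_G(P)$. Applying Fong--Reynolds together with the vanishing of the Schur multiplier of cyclic groups (to trivialize any residual twist), one further obtains a source-algebra equivalence between $b$ and the group algebra $\CO(P \rtimes E)$, where $E$ is the inertial quotient. Since $P$ is cyclic, $\Aut(P)$ has cyclic $p'$-part of order dividing $p-1$, and hence $E$ is cyclic with $|E|$ dividing $p-1$ (in particular $E=1$ when $p=2$). The bimodules implementing these equivalences are $p$-permutation modules over $\CO$, whose underlying group-element bases are pointwise fixed by $\sigma$; consequently the induced character bijections commute with the correspondence $\chi\mapsto\chi^{\bar\sigma}$, so it suffices to prove $\sf(\CO(P\rtimes E))=1$.

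Second, since $\sigma$ fixes each element of the group basis, one has $\sigma(\CO(P\rtimes E))=\CO(P\rtimes E)$ as $\CO$-subalgebras of $\CO G$, and I would take $\beta$ to be the identity map. The key numerical fact is that every $p'$-element $s$ of $P\rtimes E$ is conjugate into $E$ (as $E$ is a Hall $p'$-subgroup of the solvable group $P\rtimes E$), and since $|E|$ divides $p-1$ we have $s^{p-1}=1$, so $s^p=s$. Therefore, for $\chi\in\Irr(\CO(P\rtimes E))$ and any $g=us$ with $u$ a $p$-element and $s\in C(u)$ a $p'$-element,
\[
(\chi\circ\bar\beta^{-1})(us)\;=\;\chi(us)\;=\;\chi(us^p)\;=\;\chi^{\bar\sigma}(us),
\]
the last equality being the identity from \cite[Proposition 3.5]{EaLiv}. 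This yields $\sf(\CO(P\rtimes E))=1$, and transporting back through the source-algebra equivalences gives $\sf(B)=1$.

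The principal technical point is the $\bar\sigma$-equivariance of the reductions in the first step. This hinges on the fact that the bimodules realising the source-algebra equivalences are $p$-permutation modules over $\CO$: on a permutation basis $\sigma$ acts trivially, so each such bimodule is canonically isomorphic to its $\sigma$-twist, and the resulting character bijection must intertwine $\chi\mapsto\chi^{\bar\sigma}$. Granting this, the theorem collapses to the arithmetic observation that $s^p=s$ for every $p'$-element of a group of the form $P\rtimes E$ with $|E|$ dividing $p-1$.
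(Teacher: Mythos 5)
Your second step — the computation that $\sf(\OPE)=1$, resting on the observation that every $p'$-element $s$ of $P\rtimes E$ satisfies $s^p=s$ because $|E|$ divides $p-1$ — is correct and is essentially the same observation that opens the paper's proof. Likewise, the identification of the Brauer correspondent with $\OPE$ via K\"ulshammer's structure theorem and the vanishing of $H^2(E,k^\times)$ for cyclic $E$ is fine.

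The gap is in your first step. You assert that $B$ is \emph{source-algebra equivalent} to its Brauer correspondent $b$ in $\CO N_G(P)$ (hence Morita equivalent, in particular), and you let the whole reduction rest on transporting $\sf(\OPE)=1$ across a chain of Morita equivalences realised by $p$-permutation bimodules. This is false: for blocks with cyclic defect groups, $B$ and its Brauer correspondent are in general only \emph{stably} equivalent of Morita type (and derived equivalent), not Morita equivalent. Puig's theorem describes the source algebra of $B$ in terms of its Brauer tree, whereas the Brauer correspondent's Brauer tree is a star; these differ whenever the tree of $B$ is not a star (e.g.\ the principal block of $A_5$ at $p=5$ versus its Brauer correspondent). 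So the Morita equivalence you want to invoke simply does not exist.

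This is exactly the subtlety that the paper's proof is engineered to handle, and that the paper flags explicitly in Remark~\ref{stable-sfO}: it is not known whether strong Frobenius numbers are invariant under arbitrary stable equivalences of Morita type, because a simple module on one side corresponds only to an indecomposable nonprojective module on the other. The paper gets around this by (i) using that indecomposable modules over a cyclic block are uniserial, hence determined by top and length, so that the isomorphism $\beta:C\cong\sigma(C)$ sends any indecomposable $V$ to $V^\sigma$, not merely simples to simples; (ii) invoking \cite[Theorem 2.1]{Listable} to upgrade the resulting stable equivalence $B\sim\sigma(B)$ to a Morita equivalence; and (iii) verifying the character correspondence separately on $\Pr(B)$ and on $L^0(B)$, the latter via the isometry induced by truncated restriction. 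Without these steps (or some substitute, such as the source-algebra argument of Proposition~\ref{sfOsourcealgebra} in the paper), the passage from $\OPE$ back to $B$ is not justified.
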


\begin{proof}
We use without further notice standard results on blocks with a cyclic 
defect group. For an expository account with references, see 
\cite[Chapters 11]{LiBookII}. 

If $P$ is trivial, then $B$ is a matrix algebra over $\CO$, and the
result holds trivially. Suppose that $P\neq$ $\{1\}$. 

Denote by $E$ the inertial quotient of $B$ with respect to a choice of 
a maximal $(G,B)$-Brauer pair. Then $E$ is a cyclic subgroup of 
$\Aut(P)$ of order dividing $p-1$. Thus the character values of 
$\eta\in\Irr(P\rtimes E)$ are invariant under the
automorphism $\bar\sigma$ of $\bar K$, because $\bar\sigma$ fixes all
$p$-power order roots of unity, and it also fixes roots of unity of
order dividing $p-1$. In other words, for any $\eta\in$ 
$\Irr(P\rtimes E)$ we have $\eta^{\bar\sigma}=\eta$, and
hence the identity algebra automorphism of $\OPE$ shows that 
$\sf(\OPE)=1$.

Let $Z$ be the subgroup of order $p$ of $P$, and set $H=$ $N_G(Z)$.
Denote by $C$ the block of $\OH$ with defect group $P$ corresponding 
to $B$. Then $C$ is Morita equivalent to $\OPE$. It follows from 
\cite[Proposition 3.12]{EaLiv} that $\sf(C)=1$. In other words,
there is an $\CO$-algebra isomorphism $\beta : C\cong$ $\sigma(C)$
whose coefficient extension to $\bar K$ induces on characters the 
correspondence $\eta\mapsto$ $\eta^{\bar\sigma}$ for $\eta\in$ 
$\Irr(C)$. Since irreducible Brauer characters in $C$ lift to ordinary 
irreducible characters, it follows that $\beta$ induces on isomorphism 
classes of simple modules the correspondence given by $T\mapsto$ 
$T^{\sigma}$ for any simple $k\tenO C$-module $T$. Since indecomposable 
$k\tenO C$-modules are uniserial, hence determined by their top 
composition factor and their length, it follows that the Morita 
equivalence induced by $\beta$ sends any $k\tenO C$-module $V$ to 
$V^{\sigma}$. 

Now truncated induction and restriction between $C$ and $B$ yields a 
stable equivalence of Morita type between $C$ and $B$ given by the 
$B$-$C$-bimodule $M=$ $b\OG c$ and its dual $M^*\cong$ $c\OG b$, where 
$b=$ $1_B$ and $c=$ $1_C$. 
Then $M^{\sigma}\cong$ $\sigma(b)\OG\sigma(c)$ and its dual yield a 
stable equivalence of Morita type between $\sigma(C)$ and $\sigma(B)$. 
In addition, by the above, we have an $\CO$-algebra isomorphism 
$\beta : C\cong$ $\sigma(C)$ inducing the correspondence $\eta\mapsto$ 
$\eta^{\bar\sigma}$ on $\Irr(C)$. Combining the three correspondences 
yields a stable equivalence of Morita type between $B$ and $\sigma(B)$ 
given by the $B$-$\sigma(B)$-bimodule
$$M \tenC {{_\beta}(M^{\sigma}})^*$$
and its dual. We follow a simple $k\tenO B$-module $S$ through this 
equivalence (ignoring projective summands). First, the truncated
restriction of $S$ to $C$ yields an indecomposable $k\tenO C$-module 
$V$. Under $\beta$, and thanks to the above observations, this 
corresponds to the $k\tenO \sigma(C)$-module $V^{\sigma}$. This 
module, in turn, corresponds to the simple $k\tenO \sigma(B)$-module 
$S^{\sigma}$ through truncated restriction from $\sigma(B)$ to 
$\sigma(C)$. In other words, the stable equivalence between $B$ and 
$\sigma(B)$ given by $M \tenC {{_\beta}(M^{\sigma}})^*$ sends every 
simple $k\tenO B$-module $S$ to the simple $k\tenO \sigma(B)$-module 
$S^{\sigma}$, plus possibly a projective summand.
It follows from \cite[Theorem 2.1]{Listable} that the bimodule
$M \tenC {{_\beta}(M^{\sigma}})^*$ has, up to isomorphism, a unique 
nonprojective $B$-$\sigma(B)$-bimodule summand $N$, and that the stable 
equivalence given by $N$ is in fact a Morita equivalence. Again by 
construction, this Morita equivalence sends any $k\tenO B$-module $U$ 
to $U^{\sigma}$.

Note in particular that this Morita equivalence preserves the 
dimensions of simple modules, and hence is induced by an
$\CO$-algebra isomorphism $\alpha : B\cong$ $\sigma(B)$ satisfying
$N\cong$ ${_\alpha{\sigma(B)}}$ as $B$-$\sigma(B)$-bimodules.
We need to show that under this isomorphism, a character $\chi\in$
$\Irr(B)$ corresponds to $\chi^{\bar\sigma}$. Denote by $\Pr(B)$ the
subgroup of $\Z\Irr(B)$ of projective generalised characters (that is,
of generalised characters in $B$ which vanish on all $p$-singular
elements in $G$) and by $L^0(B)$ the subgroup of all generalised
characters in $\Z\Irr(B)$ which are perpendicular to $\Pr(B)$ (or
equivalently, which vanish on all $p'$-elements in $G$). The group
$\Pr(B)\oplus L^0(B)$ has finite index in $\Z\Irr(B)$, and therefore
it suffices to show that any $\psi$ in this subgroup corresponds
to $\psi^{\bar\sigma}$ under the isomorphism $\alpha$. 

Since a projective indecomposable $B$-modules is uniquely determined, 
up to isomorphism, by its unique simple quotient, it follows that 
any projective character $\psi$ in $B$ corresponds under $\alpha$ to 
$\psi^{\sigma}$. We need to show the analogous statement for $\psi\in$
$L^0(B)$. Recall from above that under the scalar extension of 
$\beta$ to $\bar K\tenO C$, every generalised character $\eta$ of 
$\bar K\tenO C$ corresponds to the generalised character 
$\eta^{\bar\sigma}$. Moreover, the stable equivalence of
Morita type between $B$ and $C$ given by $M$ induces a partial
isometry $L^0(B)\cong$ $L^0(C)$ (this is well-known; see e. g. 
\cite{Dadecyclic} or also \cite[\S 3]{KeLionesimple} for an account 
using the above notation). 
A generalised character $\psi\in$ $L^0(B)$ vanishes
except possibly on elements whose $p$-part is nontrivial and
conjugate to an element in $P$. Note that $H$ contains the centralisers
in $G$ of all nontrivial elements in $P$. Thus $\psi$ is completely 
determined by its restriction to $H$. The restriction map from $G$ to
$H$ applied to a generalised character $\psi$ of $G$ commmutes trivially 
with the correspondence $\psi\mapsto$ $\psi^{\bar\sigma}$ and the 
analogous correspondence on generalised characters of $H$. Moreover, at 
the level of $H$, this correspondence sends the component of 
$\Res^G_H(\psi)$ belonging to $C$ to the component of 
$\Res^G_H(\psi^{\bar\sigma})=$ $(\Res^G_H(\psi))^{\bar\sigma}$
belonging to $\sigma(C)$. Thus the isometries $L^0(B)\cong$ $L^0(C)$
and $L^0(\sigma(B))\cong$ $L^0(\sigma(C))$ given by the truncated 
restrictions from $B$ to $C$ and from $\sigma(B)$ to $\sigma(C)$,
respectively, commute with the correspondence sending $\psi\in$ $L^0(B)$ 
to $\psi^{\bar\sigma}$ and the analogous correspondence for $L^0(C)$. 
This shows that $\alpha$ induces the correspondence $\psi\mapsto$ 
$\psi^{\bar\sigma}$ for all $\psi\in$ $L^0(B)$, hence for all 
generalised characters in $B$. The result follows.
\end{proof}

For the sake of completeness, we mention that a similar result holds for
blocks with a Klein four defect group (the proof in this case is an
immediate consequence of the structure of the source algebras of blocks
with a Klein four defect group).
 
\begin{Proposition} \label{Kleinfour-sfO}
Suppose that $p=2$. Let $G$ be a finite group and $B$ a block of $\OG$ 
having a Klein four defect group $P$. We have $\sf(B)=1$. 
\end{Proposition}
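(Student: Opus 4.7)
The plan is to invoke the source algebra classification of blocks with a Klein four defect group and reduce the verification of $\sf(B)=1$ to three explicit model algebras.

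First I would apply the structure theorem of Craven, Eaton, Kessar and Linckelmann, which asserts that the source algebra of $B$ is isomorphic as an interior $P$-algebra to one of $\CO P$, the principal block $\CO A_4 b_0$, or the principal block $\CO A_5 b_0$. Applying \cite[Proposition~3.12]{EaLiv} (in the same manner as in the proof of Theorem~\ref{cyclic-sfO} above) then reduces the problem to computing the strong Frobenius number of each of these three algebras individually.

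For $\CO P$: since $P$ has exponent $2$, every $\eta\in\Irr(P)$ takes values in $\{\pm 1\}$ and so is fixed by $\bar\sigma$; the identity automorphism of $\CO P$ therefore witnesses $\sf(\CO P)=1$. For $\CO A_n b_0$ with $n\in\{4,5\}$, I would take the $\CO$-algebra automorphism $\alpha$ of $\CO A_n b_0$ induced by conjugation by a transposition in $S_n$ (an outer automorphism of $A_n$ fixing the principal block idempotent $b_0$). A direct character-theoretic check should show that $\alpha$ realises precisely the action of $\bar\sigma$ on $\Irr(\CO A_n b_0)$: for $n=4$, both swap the two nontrivial linear characters (whose values at $3$-cycles are $\omega$ and $\omega^2=\bar\sigma(\omega)$); for $n=5$, both swap the pair $\chi_3^{\pm}$ of three-dimensional characters in the principal block (distinguished by values involving $\sqrt{5}\in\Q(\zeta_5)$, which $\bar\sigma$ negates since the Frobenius $\zeta_5\mapsto\zeta_5^2$ has order $4$ in $\Gal(\Q(\zeta_5)/\Q)$ while $\sqrt 5$ has stabiliser of order $2$), and both fix the remaining rational characters.

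The substantive input is the source algebra classification for Klein four defect blocks, which is a deep theorem; once it is granted, the only remaining obstacle is the routine book-keeping of matching the action of $\bar\sigma$ on character values with the action of the outer automorphism from $S_n$ on conjugacy classes, as sketched above.
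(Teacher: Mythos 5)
Your proposal is correct and takes essentially the same approach as the paper: reduce via \cite[Proposition 3.12]{EaLiv} to the three Morita (equivalently, source algebra) representatives $\CO P$, $\CO A_4$, and the principal block of $\CO A_5$, and in the last two cases realise the action of $\bar\sigma$ on $\Irr$ by conjugation with a transposition in $S_n$. The only cosmetic difference is that the paper treats the nilpotent case ($\CO P$) by appeal to its Remark \ref{nilp-sfO} rather than redoing the elementary computation, and cites \cite{Likleinfour} and \cite{LiBookII} for the classification rather than naming the CEKL theorem explicitly; your character-theoretic check for $A_5$ (that $\bar\sigma$ negates $\sqrt 5$ and hence swaps the two degree-$3$ characters) is the same as the paper's.
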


\begin{proof}
If $B$ is nilpotent, then the result holds by Remark \ref{nilp-sfO}
below. Suppose that $B$ is not nilpotent. Then $B$ is Morita equivalent 
to either $\CO A_4$ or the principal block algebra of $\CO A_5$
(cf. \cite[Corollary 1.4]{Likleinfour} or \cite[Chapter 12]{LiBookII} for
an expository account with more references on blocks with Klein four 
defect groups). 
Suppose that $B$ is Morita equivalent to $\CO A_4$. The automorphism 
$\bar\sigma$ of $\bar K$ raising odd order roots of unity to their 
square and fixing $2$-power order roots of unity exchanges the two  
nontrivial linear characters of $A_4$ and fixes all other characters. 
The $\CO$-algebra automorphism of $\CO A_4$ given by conjugation with a 
transposition in $S_4$ has the same effect on the characters of $A_4$.
Thus by \cite[Proposition 3.12]{EaLiv} we have $\sf(B)=$ 
$\sf(\CO A_4)=1$. Suppose that $B$ is Morita equivalent to the 
principal block of $\CO A_5$. The automorphism $\bar\sigma$ of $\bar K$ 
raising odd order roots of unity to their squares and fixing $2$-power 
order roots of unity exchanges the two degree $3$ characters of $A_5$ 
and fixes all other characters. The $\CO$-algebra automorphism given by 
conjugation with a transposition in $S_5$ has the same effect on the 
characters of $A_5$. As before, we get $\sf(B)=1$.
\end{proof}

\begin{Remark} \label{nilp-sfO}
For any nilpotent block $B$ of $\OG$ we have $\sf(B)=1$. Indeed, if $B$ 
is nilpotent, then $B$ is Morita equivalent to a defect group algebra 
$\OP$, and hence, by \cite[Proposition 3.12]{EaLiv}, we have $\sf(B)=$ 
$\sf(\OP)=1$, where the last equality follows from the (trivial) fact 
that $\zeta^{\bar\sigma}=$ $\zeta$ for $\zeta\in$ $\Irr(P)$.
\end{Remark}

\begin{Remark}
If $\Irr(B)$ has a rational valued character, then $B$ is invariant 
under $\sigma$, and hence $\f(B)=1$. Being invariant under $\sigma$ is 
not sufficient to conclude that $\sf(B)=1$, since $\sigma$ might 
act nontrivially on $\Irr(B)$, and one would need to find an 
$\CO$-algebra automorphism of $B$ inducing the same action on $\Irr(B)$. 
In particular, the strong Frobenius numbers of principal blocks are
not known, but it is shown in \cite[Proposition 3.11]{EaLiv} that they 
are bounded in terms of defect groups. If all characters in $\Irr(B)$ 
have rational values, then clearly $\sf(B)=1$. In particular, blocks 
of symmetric groups have strong Frobenius number $1$.
\end{Remark}

\begin{Remark} \label{stable-sfO}
It is not clear, whether strong Frobenius numbers are invariant under
arbitrary stable equivalences of Morita type. Here is the problem: if 
there is a stable equivalence of Morita type between blocks $B$ and $C$ 
of positive defect, then a simple $k\tenO B$-module $S$ corresponds to 
an indecomposable nonprojective $k\tenO C$-module $V$, and similarly, 
the simple $k\tenO \sigma^m(B)$-module $S^{\sigma^m}$ corresponds to the 
indecomposable nonprojective $k\tenO \sigma^m(C)$-module $V^{\sigma^m}$, 
where $m$ is a positive integer. The issue is that even if there is an 
$\CO$-algebra isomorphism $C\cong$ $\sigma^m(C)$ under which every 
simple $k\tenO C$-module $T$ corresponds to $T^{\sigma^m}$, it is not 
clear why under this isomorphism the module $V$ would have to correspond
to $V^{\sigma^m}$. In the proof of Theorem \ref{cyclic-sfO} above, we 
have made use of specific properties of stable equivalences between 
blocks with cyclic defect groups in order to conclude that $V$ does in 
indeed correspond to $V^{\sigma^m}$, for any $m\geq$ $1$. Slightly more 
generally, the argument in the proof of Theorem \ref{cyclic-sfO} shows 
that if there is a stable equivalence between a block $B$ of $G$ and a 
block $C$ of a subgroup $H$ of $G$ induced by truncated induction and
restriction, then in order to show that $\sf_\CO(B)\leq$ $m$, it 
suffices to show that there is an $\CO$-algebra isomorphism
$\beta : C\cong$ $\sigma^m(C)$ with the property that any
indecomposable $k\tenO C$-module $V$ corresponding to a simple
$k\tenO B$-module (under the stable equivalence between $B$ and $C$)
is mapped to the module $V^{\bar\sigma^m}$ under the Morita equivalence 
between $C$ and $\sigma^m(C)$ induced by $\beta$ 
\end{Remark}

Since the generalised decomposition matrix of a block $B$ of $\OG$ is 
nondegenerate, one can detect the action of $\bar\sigma$ on $\Irr(B)$ 
also in terms of the action of $\sigma$ on local pointed groups, by 
making use of Puig's description of generalised decomposition numbers 
in the form $\chi(u_\epsilon)$, where $u_\epsilon$ runs over a set of 
representatives of $G$-conjugacy classes of local pointed elements on 
$B$, where $\chi\in$ $\Irr(B)$, and where $\chi(u_\epsilon)=$ $\chi(uj)$
for some $j\in$ $\epsilon$; this number is independent of the coice of 
$j$ as $\chi$ is a class function. Note that this description implies 
the well-known fact that generalised decomposition numbers are 
$\Z$-linear combinations of $p$-power roots of unity, hence they are 
invariant under $\bar\sigma$. The ring automorphism of $\OG$ obtained 
from extending $\sigma$ fixes the group elements, hence stabilises 
fixed point subalgebras, relative traces, and hence permutes (local) 
points of subgroups of $G$. This can be used to bound strong Frobenius 
numbers in terms of certain source algebra isomorphisms.

\begin{Proposition} \label{sfOsourcealgebra}
Let $G$ be a finite group, $B$ a block of $\OG$ with defect group
$P$, let $i\in$ $B^P$ be a source idempotent of $B$ and set $A=$ 
$iBi$. Let $m$ be a positive integer. Then $\sigma^m(i)$ is a source
idempotent of the block $\sigma^m(B)$ in $(\sigma^m(B))^P$. Suppose that
there exists a source algebra isomorphism $\alpha : A\to$ $\sigma^m(A)$ 
as interior $P$-algebras such that $\alpha(\epsilon)=$ 
$\sigma^m(\epsilon)$ for any $u\in$ $P$ and any local point $\epsilon$ 
of $\langle u\rangle$ on $A$. Then $\alpha$ extends to an $\CO$-algebra 
automorphism $\beta : B\cong$ $\sigma^m(B)$, the class of $\beta$ in 
$\Out(B)$ is uniquely determined by the class of $\alpha$ in $\Out(A)$, 
and for any $\chi\in$ $\Irr(B)$ we have
$$\chi^{\bar\sigma^m} = \chi\circ\beta^{-1}\ .$$
In particular, we have $\sf(B)\leq$ $m$.
\end{Proposition}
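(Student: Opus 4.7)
My plan is to first verify that $\sigma^m(i)$ is a source idempotent of $\sigma^m(B)$; then extend $\alpha$ to a block isomorphism $\beta$ via source-algebra Morita theory; and finally establish the character identity by evaluating both sides on local pointed elements and invoking Puig's nondegeneracy of the generalized decomposition matrix. The first step is straightforward: since $\sigma^m$ fixes group elements, it commutes with the conjugation action of $G$, preserves fixed-point subalgebras $(\OG)^Q$, relative trace maps, the Brauer map, and primitivity of idempotents, and so carries $i$ to a source idempotent $\sigma^m(i)$ of $\sigma^m(B)$ with source algebra $\sigma^m(A) = \sigma^m(i)\sigma^m(B)\sigma^m(i)$.

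For the second step I would exploit that the source idempotent property gives $B = BiB$ and that $Bi$ is a $(B,A)$-progenerator realizing $B \cong \End_A(Bi)^{op}$. Transporting $Bi$ along $\alpha$ yields a right $\sigma^m(A)$-module, and the hypothesis that $\alpha$ preserves local points forces, by standard source-algebra Morita theory, this transported module to be isomorphic to $\sigma^m(B)\sigma^m(i)$. Passing to endomorphism rings produces the $\CO$-algebra isomorphism $\beta$ extending $\alpha$, and the uniqueness of source idempotents up to $(B^P)^\times$-conjugation gives the uniqueness of $\beta$ in $\Out(B)$. I expect this extension step to be the main technical obstacle, as it requires converting the local-point compatibility into an actual isomorphism of progenerators.

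For the character identity, fix $\chi \in \Irr(B)$, a local point $\epsilon$ of $\langle u \rangle$ on $A$ with $u \in P$, and $j \in \epsilon$. Because $\alpha(\epsilon) = \sigma^m(\epsilon)$ there is $w \in (\sigma^m(A)^{\langle u \rangle})^\times$ with $\alpha(j) = w \sigma^m(j) w^{-1}$. Then $\beta(uj) = u\alpha(j)$, so $(\chi \circ \beta^{-1})(u\alpha(j)) = \chi(uj)$. On the other side, $\chi^{\bar\sigma^m}(u\alpha(j)) = \bar\sigma^m(\chi(\sigma^{-m}(u\alpha(j))))$; since $\sigma^{-m}$ fixes group elements and $\sigma^{-m}(w) \in A^{\langle u \rangle}$ commutes with $u$, the class-function property of $\chi$ collapses this to $\bar\sigma^m(\chi(uj))$, which equals $\chi(uj)$ because $\chi(uj) = \chi(u_\epsilon)$ is a generalized decomposition number, hence a $\Z$-linear combination of $p$-power roots of unity and fixed by $\bar\sigma^m$. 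Since the $\beta$-images of the local pointed elements on $B$ exhaust those on $\sigma^m(B)$, Puig's nondegeneracy of the generalized decomposition matrix promotes this pointwise agreement to equality of central functions on $\bar K \tenO \sigma^m(B)$, and $\sf(B) \leq m$ follows directly from the definition.
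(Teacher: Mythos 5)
Your proposal is correct and follows essentially the same route as the paper: verify $\sigma^m(i)$ is a source idempotent, extend $\alpha$ to $\beta$ via the Morita equivalence between $A$ and $B$ (the paper phrases this as dimension-preservation of simple modules rather than progenerator transport, but the underlying mechanism is the same, and both rely on the local-point hypothesis at the trivial subgroup to identify the corresponding simple module with $S^{\sigma^m}$), then evaluate $\chi^{\bar\sigma^m}$ and $\chi\circ\beta^{-1}$ on local pointed elements and use that generalized decomposition numbers are fixed by $\bar\sigma$ together with the nondegeneracy of the generalized decomposition matrix. The one cosmetic difference is in the character computation: the paper avoids the conjugating element $w$ by choosing different representatives of $\epsilon' = \sigma^m(\epsilon) = \beta(\epsilon)$ for the two sides (namely $\sigma^m(j)$ and $\beta(j)$), exploiting that $\chi(u_{\epsilon'})$ is independent of the representative, whereas you fix the single representative $\alpha(j)$ and unwind the conjugation explicitly -- both are valid.
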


\begin{proof}
By the remarks preceding the Proposition, $\sigma^m(i)$ is a source
idempotent of $\sigma^m(B)$ for the same defect group $P$. The
extendibility of $\alpha$, uniquely in its class of outer automorphisms,
follows from the fact that $A$ and $B$ are Morita equivalent, together
with the fact that a simple $k\tenO B$-module $S$ and its corresponding
simple $k\tenO \sigma(B)$-module $S^{\sigma^m}$ have the same dimension.
Let $\chi\in$ $\Irr(B)$ and let $u_\epsilon$ be a local pointed
element on $A$, with $u\in$ $P$. Then $\epsilon'=$ 
$\sigma^m(\epsilon)$ is a local point of $\langle u\rangle$ on 
$\sigma^m(A)$, and all local points of $\langle u\rangle$ on 
$\sigma^m(A)$ arise in this way.
Thus it suffices to show that $\chi^{\bar\sigma^m}(u_{\epsilon'})=$
$(\chi\circ\beta^{-1})(u_{\epsilon'})$. Now $\epsilon'=$ 
$\sigma^m(\epsilon)=$ $\beta(\epsilon)$. Choose $j\in$ $\epsilon$.
Then $\sigma^m(j)$ and $\beta(j)$ both belong to $\epsilon'$, by
the assumptions. Since $\sigma$ fixes group elements, we have 
$\sigma^m(uj)=$ $u\sigma^m(j)$. By the assumptions on $\beta$, we also
have $\beta(uj)=$ $u\beta(j)$. Thus we have
$$(\chi\circ\beta^{-1})(u_{\epsilon'}) = \chi(u_\epsilon)$$
and we also have
$$\chi^{\bar\sigma^m}(u_{\epsilon'}) = \bar\sigma^m(\chi(u_\epsilon))=
\chi(u_\epsilon)\ ,$$
where in the last equation we use that $\bar\sigma$ fixes 
$\chi(u_\epsilon)$. The result follows.
\end{proof}

\begin{Remark} 
One can use Proposition \ref{sfOsourcealgebra} to give an alternative
for the second half of the proof of Theorem \ref{cyclic-sfO}. With the 
notation and hypotheses of Theorem \ref{cyclic-sfO}, an indecomposable 
endopermutation $\OP$-module $V$ with vertex $P$ and determinant $1$ is 
invariant under $\sigma$. Since $B$ and $\sigma(B)$ are isomorphic as 
rings, it follows that the combinatorial data of $B$ and $\sigma(B)$ 
(expressed in terms of their Brauer trees) coincide. The classification 
of source algebras of blocks with cyclic defect groups implies 
that $B$ and $\sigma(B)$ have isomorphic source algebras, as interior
$P$-algebras. Any such isomorphism will satisfy the additional 
hypotheses in Proposition  \ref{sfOsourcealgebra} on the nontrivial 
local pointed elements for the simple reason that any nontrivial
subgroup of $P$ has a unique local point on a source algebra of $B$.
(This is a general feature of blocks with an abelian defect group and
Frobenius inertial quotient.)
The local points of the trivial subgroup of $P$ correspond to the
isomorphism classes of simple $k\tenO B$-modules, and the first part of
the proof of Theorem \ref{cyclic-sfO}, adapted to source 
algebras, shows that the additional hypotheses in Proposition 
\ref{sfOsourcealgebra} are also satisfied for the local points of the 
trivial subgroup.
\end{Remark}


\end{document}